\DeclareMathOperator{\rank}{rank}
\DeclareMathOperator{\karea}{K-\text{area}}
\DeclareMathOperator{\Vol}{Vol}
\newtheorem {theorem} {Theorem} [section]
\newtheorem{lemma}[theorem] {Lemma}
\newtheorem {definition} [theorem] {Definition}
\newtheorem {remark} [theorem] {Remark}
\newtheorem {rem/q}[theorem]{Remark/Question}
\numberwithin {equation} {section}
\begin{document}
\author {Yasha Savelyev}
\address{Centre de Recherches Math\'ematiques, Universit\'e de Montr\'eal, C.P. 6128, Succ. Centre-ville, Montr\'eal H3C 3J7, Qu\'ebec, Canada}
\email{savelyev@crm.umontreal.ca}
\title   {Gromov $K$-area and jumping curves in $ \mathbb{CP} ^{n}$}  
\maketitle
\begin{abstract} We give here some extensions of Gromov's and Polterovich's
theorems on $\karea$ of $ \mathbb{CP} ^{n}$, particularly in the symplectic and
Hamiltonian context. Our main methods involve Gromov-Witten theory, and some
connections with Bott periodicity, and loop groups. The argument is closely
connected with study of jumping curves in $ \mathbb{CP} ^{n}$, 
and as an upshot we  prove a  new symplectic geometric  theorem on these
jumping curves.
\end{abstract}
\section {Introduction}
In \cite{Gromov} Gromov proposed an interesting way to probe the macroscopic
geometry of Riemannian and symplectic manifolds by means of the geometry of
complex vector bundles on the manifold. This is a geometric analogue
of   K-theory which is partly why it was named 
$\karea$. This
construction involves minimizing sup norm of the curvature  over all
 homologically essential vector bundles and all connections, and his main
 theorem in the Riemannian setting for spin, positively curved manifolds 
 $X,g$ used somewhat mysteriously the index theorem for the
twisted Dirac operator.

To pass to the symplectic world Gromov  considered an additional variation
over all compatible metrics, i.e. all compatible almost complex structures with
a symplectic form $\omega$ on $X$. At the moment the resulting invariant is still
very poorly understood. Here we focus on $X = \mathbb{CP} ^{n}$ and relate this
notion to the classical theory of jumping curves and  quantum classes
originally defined by the author in \cite{GS}. Interestingly, this allows us
to arrive at a purely algebraic geometric theorem in the theory of jumping
curves, as well as its symplectic generalization.

Some of the symplectic methods of the present paper continue in the spirit of
Polterovich \cite{polterovich}, and Entov
\cite{Entov}.
%
\subsection{}
We begin by discussing $\karea$ in the symplectic context. 
Fix $$p: E \to X$$  a rank $r$ complex vector bundle
with $c_1 (E)=0$ over a closed symplectic manifold $(X ^{2n}, \omega)$ and let
$p: P \to X$  denote its projectivization. Let $ \mathcal {A}$ be a Hamiltonian
connection on $P$, with curvature 2-form $R ^{ \mathcal {A}}$, which at $x \in
X$ takes values in the Lie algebra of $ \text {Ham}( p ^{-1} (x))$, i.e. the space of normalized smooth 
functions $G$ on $p ^{-1} (x)$. Where normalized means $$\int _{p
^{-1} (x)} G \Vol _{\omega _{st}} =0,$$ and $\omega _{st}$ on $p ^{-1} (x) \simeq \mathbb{CP} ^{r-1}$ is always assumed
to be the standard form with $\omega _{st} ( [line])=1$.

 Let $j_X$ be an
$\omega$-compatible almost complex structure on $X$,
 and $g _{j_X}$ the associated metric. We define the norm of curvature by
\begin{equation} \label {eq.norm} ||R ^{ \mathcal {A}}|| _{g _{j_X}} = \sup_ {x
\in X, \xi, \eta \in T_x X}  |R ^{ \mathcal {A}} (\xi, \eta)| ^{+}_H,
\end{equation}
where $| \cdot| ^{+}_H$ is ``half'' the Hofer norm $|G| ^{+}_H= \max G$, for $G:
(p^{-1} (x)
\simeq 
\mathbb{CP} ^{r-1}) \to \mathbb{R}$ in the Lie algebra of $\text{Ham}(p ^{-1}
(x))$, and the supremum is over all orthonormal pairs $\xi, \eta$. 
Here is the basic  quantity we will study:
\begin{equation} \label {eq.karea1}\karea ^{-1} (X, \omega) = \inf _{E, 
\mathcal {A}, j_X} ||R ^{  \mathcal {A}}|| _{g _{j_X}},
\end{equation}
where the infimum is over all $E$ with some non-vanishing Chern number,
and $2r  \geq \dim _{ \mathbb{R}}X$.
\begin{remark} The quantity \eqref{eq.karea1} is closely
related to the one studied by Gromov in \cite{Gromov}. This relationship is
discussed in \cite{polterovich}. We make a few comments: Gromov does not
projectivize and works with unitary connections and consequently with standard norm on the Lie algebra of $U (r)$,
he also works with the inverse of our quantity, which we symbolize
 by the superscript $-1$ in $\karea ^{-1}$. The condition that
$2r  \geq \dim _{ \mathbb{R}}X$ is related to stability for homotopy
 groups of $SU (n)$ and is vacuous if we restrict to unitary connections, since
after stabilizing $E$ we may extend the unitary connection to the stabilization,
without affecting the norm \eqref{eq.norm}. 
\end{remark}
Here is our first theorem:
\begin{theorem}  \label{thm.projective}
\begin{equation}  \karea ^{-1} (\mathbb{CP} ^{n}, E) \equiv \inf
_{\mathcal {A}, j _{X}} ||R ^{  \mathcal {A}}|| _{g _ {{j} _{X}}}  \geq 1,
\end{equation}
where the infimum is over all Hamiltonian connections $ \mathcal {A}$ on the
projectivization of a fixed complex vector bundle $E$ on $ \mathbb{CP}
^{n}$, provided that $$\rank _{ \mathbb{C}} E \geq n,$$ $c_1(E)=0$ and some
other Chern class of $E$ does not vanish.  
In particular $$\karea ^{-1} ( \mathbb{CP} ^{n}) \geq 1.$$
\end {theorem}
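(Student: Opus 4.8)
The plan is to bound $||R^{\mathcal{A}}||_{g_{j_X}}$ from below, for \emph{every} Hamiltonian connection $\mathcal{A}$ and compatible $j_X$, by producing a holomorphic section of $P$ over a line whose coupling area is at least $1$, and then comparing this area with the curvature. Let $\tilde J$ be the almost complex structure on the total space $P$ assembled from $j_X$, the fibrewise complex structure, and the horizontal distribution of $\mathcal{A}$, and let $[\Omega]$ be the canonical connection-independent coupling class on $P$ determined by the Hamiltonian structure, normalised by the convention $\omega_{st}([\mathrm{line}])=1$ on the fibre $\mathbb{CP}^{r-1}$. I would first record that a $j_X$-holomorphic curve in the line class $h$ of $\mathbb{CP}^n$ always exists and has $\omega$-area $1$; such a line $\ell$ plays the role of the base, and $P|_\ell\to\ell$ is a Hamiltonian $\mathbb{CP}^{r-1}$-bundle over $S^2$. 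The argument then splits into an a priori energy estimate and a nonvanishing statement.

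For the energy estimate I would use the standard comparison for Hamiltonian fibrations (in the lineage of \cite{polterovich}): for any $\tilde J$-holomorphic section $\sigma$ over $\ell$,
\begin{equation*}
\energy(\sigma)=\int_\ell \sigma^*\Omega \ \leq\ \int_\ell \max_{z\in p^{-1}(x)} R^{\mathcal{A}}(\xi,\eta)(z)\,dA \ \leq\ ||R^{\mathcal{A}}||_{g_{j_X}}\cdot\mathrm{area}(\ell),
\end{equation*}
where $\xi,\eta$ is an oriented $g_{j_X}$-orthonormal frame of $T\ell$. It is precisely here that the choice of the \emph{half} Hofer norm $|G|^+_H=\max G$ in \eqref{eq.norm} is forced: positivity of the vertical energy of a holomorphic section leaves only the fibrewise maximum of the curvature Hamiltonian on the right-hand side. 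Since $\mathrm{area}(\ell)=1$, a section with $\energy(\sigma)\geq 1$ yields $||R^{\mathcal{A}}||_{g_{j_X}}\geq 1$ at once, which is the theorem once we let $\mathcal{A}$ and $j_X$ vary.

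It remains to produce such a section, and this is the main obstacle, at which the hypotheses on $E$ enter. Section classes over $\ell$ differ by integer multiples of the fibre line, which has coupling area $1$, so the spectrum of section areas is $\{c_0+k:k\in\mathbb{Z}\}$; the hypothesis $c_1(E)=0$ centres this spectrum at the integers with a balanced section of area $0$, so the minimal \emph{positive} coupling area is exactly $1$. I would encode $\mathcal{A}$ through the classifying map $\mathbb{CP}^n\to B\,\mathrm{Ham}(\mathbb{CP}^{r-1})\simeq B\mathrm{PU}(r)$ and apply the quantum characteristic class formalism of \cite{GS} to the symplectic manifold $(P,\Omega+\kappa\,p^*\omega_{FS})$, $\kappa\gg 0$: one obtains a Gromov--Witten invariant counting $\tilde J$-holomorphic sections over a line in the minimal section class $A$, with $\int_A\Omega=1$. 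Because $A$ has minimal positive coupling area, Gromov compactness excludes vertical bubbling, so nonvanishing of this invariant yields, for every $\tilde J$, an honest section $\sigma$ in class $A$ with $\energy(\sigma)=1$, completing the estimate. The crux is the nonvanishing itself: here I would invoke Bott periodicity in the stable range guaranteed by $\rank_{\mathbb{C}}E\geq n$, where the quantum classes detect the full range of Chern classes, so the invariant is nonzero precisely when some Chern class of $E$ fails to vanish. This is also the bridge to jumping curves, the nonzero invariant being the symplectic shadow of a jumping line. The hardest part will be making this last equivalence precise, namely identifying the quantum class attached to $P$ with the Chern data of $E$ through the Bott periodicity map and confirming that the obstruction sits at the first quantum correction rather than a higher one; once this identification is secured, combining it with the displayed energy estimate closes the argument.
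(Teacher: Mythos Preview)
Your overall architecture---produce a $\tilde J$-holomorphic section of $P$ over some line via nonvanishing of a Gromov--Witten/quantum-class invariant in the stable range, then compare its coupling area to $\|R^{\mathcal A}\|$---is exactly the paper's, and the nonvanishing input you describe is precisely Theorem~\ref{theorem.corollary} feeding into Theorem~\ref{thm.jumpline}. The gap is the \emph{direction} of your displayed energy inequality, and the error propagates through the rest of the sketch.

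With the coupling form normalised so that $\widetilde\Omega(\tilde v,\tilde w)=-R_{\mathcal A}(v,w)(m)$ on horizontal lifts, a $\tilde J$-holomorphic section $\sigma$ over a line $\ell$ of area $1$ satisfies
\[
\sigma^*\widetilde\Omega(v,jv)\;=\;-R_{\mathcal A}(v,jv)\bigl(\sigma(x)\bigr)+\omega_{st}(V,JV)\;\ge\;-\max_{m}R_{\mathcal A}(v,jv)(m),
\]
where $V$ is the vertical component of $\sigma_*v$; integrating gives $\int_\ell\sigma^*\widetilde\Omega\ge -\|R^{\mathcal A}\|$, not the upper bound you wrote. (Sanity check: for $E$ trivial and $\mathcal A$ flat, $P|_\ell=\mathbb{CP}^{r-1}\times S^2$ carries holomorphic sections of every nonnegative integer coupling area, contradicting your inequality.) Positivity of vertical energy therefore forces you to look for a section of coupling area $\le -1$, i.e.\ in class $-d\,[\mathrm{line}]+[S^2]$ with $d\ge 1$---a jumping curve in the sense of Theorem~\ref{thm.jumpline}---after which the corrected estimate yields $\|R^{\mathcal A}\|\ge d\ge 1$. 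The paper packages the same inequality as positivity of $\widetilde\Omega+(\|R^{\mathcal A}\|+\epsilon)\,p^*\omega^n_{st}$ on $J$-holomorphic curves. Note also that your bubbling claim does not survive the sign fix: in class $-[\mathrm{line}]+[S^2]$ vertical bubbling is \emph{not} excluded, but it is harmless, since any stable limit contains a genuine section component in some class $-d\,[\mathrm{line}]+[S^2]$ with $d\ge 1$, which only strengthens the bound.
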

This answers a question of Polterovich in
\cite{polterovich} about finding bounds for $\karea ^{-1} ( \mathbb{CP} ^{2},
E)$, for rank $2$ complex vector bundle over $ \mathbb{CP} ^{2}$, with
$c_1(E)=0, c_2(E)=2$. If  $c_1$ does not vanish then the argument is more
elementary, and was already worked out in \cite{polterovich}, although one
can also adapt our discussion to subsume this case. 
 The above theorem extends:
  \begin{theorem} [Gromov,
\cite{Gromov}] \label{thm.gromov} \begin{equation*}  \karea_U ^{-1} (
\mathbb{CP}^{n}, \omega _{st}) \geq 1.
\end{equation*}
\end{theorem}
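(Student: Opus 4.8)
The plan is to derive Theorem~\ref{thm.gromov} from the already established Theorem~\ref{thm.projective} by pushing a unitary connection down to the projectivization. Fix a complex vector bundle $E\to\mathbb{CP}^n$ entering the infimum defining $\karea _U ^{-1}(\mathbb{CP}^n,\omega_{st})$, so $c_1(E)=0$, $\rank_{\mathbb C}E\ge n$ (equivalently $2r\ge\dim_{\mathbb R}\mathbb{CP}^n$), and some Chern number of $E$ is non-zero; since $c_1(E)=0$ this forces a higher Chern class $c_k(E)$, $k\ge 2$, to be non-zero, which is exactly the hypothesis of Theorem~\ref{thm.projective}. Given any unitary connection $\nabla$ on $E$ and any $\omega_{st}$-compatible $j_X$, the fiberwise $U(r)$-action on $P=\mathbb{P}(E)$ factors through $PU(r)$ and preserves the fiberwise Fubini--Study form, so $\nabla$ induces a Hamiltonian connection $\mathcal A$ on $P$ in the sense of the preceding subsection.

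The heart of the argument is a pointwise comparison of the two curvature norms. A value $R^\nabla(\xi,\eta)$ of the unitary curvature is a skew--Hermitian endomorphism $iH$ with $H$ Hermitian, and the corresponding value $R^{\mathcal A}(\xi,\eta)$ of the induced Hamiltonian curvature on a fiber $\mathbb{CP}^{r-1}$ is the normalized Rayleigh quotient
\[
G([v])=\langle Hv,v\rangle-\tfrac1r\operatorname{tr}H,\qquad |v|=1,
\]
with the Fubini--Study form normalized so that $\omega_{st}([\text{line}])=1$. Hence its half-Hofer norm is $|G|^{+}_H=\max G=\lambda_{\max}(H)-\tfrac1r\operatorname{tr}H$, where $\lambda_{\max}(H)$ is the top eigenvalue, and the task is to dominate this by Gromov's standard norm of $iH$. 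Once this comparison is carried out with the correct normalization it yields the pointwise, hence supremum-level, inequality
\[
||R^{\mathcal A}||_{g_{j_X}}\le ||R^{\nabla}||_U ,
\]
and then Theorem~\ref{thm.projective} applied to $\mathcal A$ gives $||R^{\mathcal A}||_{g_{j_X}}\ge 1$, whence $||R^{\nabla}||_U\ge 1$; taking the infimum over all $E$, $\nabla$ and $j_X$ produces $\karea _U ^{-1}(\mathbb{CP}^n,\omega_{st})\ge 1$.

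I expect the genuine difficulty to lie entirely in this norm comparison, specifically in matching the constants so that the inequality comes out with factor exactly $1$ and the sharp bound $\ge 1$ survives. The naive estimate of $\lambda_{\max}(H)-\tfrac1r\operatorname{tr}H$ by the operator norm of $iH$ is in fact off by a bounded factor, so one must identify Gromov's ``standard norm'' on $\mathfrak{u}(r)$ precisely, as in \cite{Gromov,polterovich}, and use a sharp eigenvalue inequality (for instance comparison with the spectral oscillation $\lambda_{\max}(H)-\lambda_{\min}(H)$, which bounds $|G|^{+}_H$ from above and is insensitive to the trace) to recover the clean constant. A subordinate bookkeeping point, easy but necessary, is to confirm that the stabilization range $2r\ge\dim_{\mathbb R}\mathbb{CP}^n$ and the non-vanishing Chern-number condition defining $\karea _U ^{-1}$ match the rank and Chern-class hypotheses of Theorem~\ref{thm.projective}.
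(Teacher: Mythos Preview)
The paper does not give its own proof of Theorem~\ref{thm.gromov}: it is stated as a classical result of Gromov, cited from \cite{Gromov}, and is included only to indicate what Theorem~\ref{thm.projective} extends. So there is no proof in the paper to compare your proposal against.

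Your strategy---push a unitary connection to the projectivization and invoke Theorem~\ref{thm.projective}---is the natural way to deduce Gromov's bound from the Hamiltonian one, and the paper's remark that ``we have the same lower bound in the unitary and Hamiltonian case'' is consistent with that. However, as you yourself flag, the argument is incomplete at exactly the point that matters: you never actually establish the pointwise inequality $||R^{\mathcal A}||_{g_{j_X}}\le ||R^{\nabla}||_U$ with constant~$1$. You write down $|G|^{+}_H=\lambda_{\max}(H)-\tfrac1r\operatorname{tr}H$, note that the naive comparison with the operator norm is ``off by a bounded factor,'' and then defer the sharp comparison to an unspecified identification of Gromov's norm together with an unspecified ``sharp eigenvalue inequality.'' That is not a proof; it is a description of what a proof would need. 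One concrete observation that would close much of this gap: since $c_1(E)=0$ you may take $\nabla$ to be an $SU(r)$-connection, so every curvature value $iH$ has $\operatorname{tr}H=0$, and then $|G|^{+}_H=\lambda_{\max}(H)\le \|iH\|_{op}$ is immediate. Whether this is exactly Gromov's normalization still has to be checked against \cite{Gromov} or \cite{polterovich}, but at least the eigenvalue step is then trivial rather than left as a promise.
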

Here $U$ in $\karea_U ^{-1}$ emphasizes that Gromov worked with unitary
connections. Notice we have the same lower bound in the unitary and
Hamiltonian case. 

\begin{rem/q} We can of course define $\karea ^{-1} ( \mathbb{CP} ^{n}, g)$ for
an arbitrary non almost Kahler metric $g$ on $ \mathbb{CP} ^{n}$. 
And it is easy to find $g$ for which $$\karea ^{-1} ( \mathbb{CP} ^{n}, g) <
1,$$ as it is elementary to check: $$\karea ^{-1} ( \mathbb{CP} ^{n}, E, c \cdot
g) = 1/c\karea ^{-1} ( \mathbb{CP} ^{n}, E,  g),$$ for $c>0$. It appears
much more difficult to construct  an example of such
a $g$ with the same volume as $g _{j} = (\omega _{st}, j)$.
\end{rem/q}

 \subsection {Jumping curves in $ \mathbb{CP} ^{n}$} Although the proof of
Theorem \ref{thm.projective} is via transcendental methods of Gromov-Witten
theory, it is also closely related  to the classical notion of jumping curves in
$ \mathbb{CP} ^{n}$. In fact, as a corollary, we obtain an interesting phenomenon regarding these jumping curves. Here is a simplified
 version of the definition,  suitable in our context.
 \begin{definition} Let $E \to \mathbb{CP} ^{n}$ be a $rank _{ \mathbb{C}} E =r$
holomorphic vector bundle, with $c_1 (E)=0$. A smooth rational curve $C$ in $
\mathbb{CP} ^{n}$ will be called a \emph { \textbf{jumping curve}}, if the
restriction of $E$ to $C$ (by which we mean pullback) is not trivial as a
holomorphic vector bundle. (Actually we will just be concerned with jumping
lines.)
\end{definition}
Jumping curves $C$ can be further classified by the holomorphic isomorphism type
of $E| _{C}$, which by Grothendieck-Birkhoff theorem is:
\begin{equation} E| _{C} \simeq \bigoplus _{i} \mathcal {O} (\alpha (i)), \text
{ with } \sum _{i} \alpha (i)=0.
\end{equation} 

We can actually give a symplectic generalization of this notion as follows: 
Let $P _{E} \to \mathbb{CP} ^{n}$ denote the projectivization of $E$, which is a
Hamiltonian bundle and so its total space has a natural deformation class of
symplectic forms $\Omega$, extending the fiber wise symplectic forms $\omega
_{st}$, see \cite{MS2}.

For a smooth rational curve $C$ in $ \mathbb{CP} ^{n}$, we have a smooth
identification of $P _{E}| _{C}$ with $X=\mathbb{CP} ^{r-1} \times S ^{2}$, and
a canonical identification of $H _{*} (X)$ with $H _{*} ( \mathbb{CP} ^{r-1}
\times S ^{2})$, since the group of bundle automorphisms of $X$ acts
trivially on homology by \cite[Theorem 1.16]{symp.str.fiber}.
 \begin{definition} Let $E \to \mathbb{CP} ^{n}$ be a complex vector bundle with
$c _{1} (E)=0$. Let $J$ be an almost complex structure compatible with $\Omega$
on $P _{E}$, and such that the projection map $P _{E} \to \mathbb{CP} ^{n}$ is
$J$-holomorphic. We will call such $J$ \emph {\textbf{admissible}}. A smooth
rational curve $C$ in $ \mathbb{CP} ^{n}$ is called a \emph { \textbf{jumping
curve}}  if $P _{E}| _{C} \simeq \mathbb{CP} ^{r-1} \times S ^{2}$ has a
$J$-holomorphic section in class $d [line] + [S^2]$, $d < 0$. 
\end{definition} 
A more natural way of stating this, is that $P _{E}| _{C}$ has a $J$ holomorphic
section $u$, with $ \langle [\widetilde{\Omega}],  [u] \rangle = d$, $d <0$,
where $[ \widetilde{\Omega}]$ is the coupling class of $P _{E}$, see \cite{MS2}.

When $J$ is induced by a holomorphic structure on $E$, this  notion is
equivalent to the classical notion, since in this case $P _{E}|_C$ is known to
be the generalized Hirzebruch bundle:
\begin{equation*} P _{E}| _{C} \simeq S ^{3} \times _{S ^{1}} \mathbb{CP}
^{r-1},
\end{equation*}
for some circle subgroup $S ^{1} \in SU (r)$. If this subgroup
is non-trivial and $H$ denotes its generating Hamiltonian, then points $x \in F
_{\max}$, (the maximum set of $H$), are fixed points of the $S ^{1}$-action on $
\mathbb{CP} ^{r-1}$, and give holomorphic sections $S ^{3} \times _{S ^{1}}
\{x\}$ of $P _{E}| _{C}$ with above $d <0$.

If we restrict to lines in $ \mathbb{CP} ^{n}$, then 
the locus  of classical jumping lines is a divisor of the variety of all lines
in $ \mathbb{CP} ^{n}$, and if $E$ is holomorphically non-trivial it is
non-empty see  \cite[Section 3]{bundles}. 
We will prove the following partial symplectic generalization of this:
\begin{theorem} \label{thm.jumpline} Let $E \to \mathbb{CP} ^{n}$ be a rank $r \geq n$, 
complex vector bundle, with $c_1 (E)=0$ and some other Chern class non-zero.
Then for any admissible $J$  on $P _{E}$ it has  degree one  jumping
lines. If we also assume that $J$ is suitably generic then $d$ above can be
chosen to be -1. 
\end{theorem}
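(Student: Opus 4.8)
The plan is to deduce this from the same quantum-class mechanism underlying Theorem \ref{thm.projective}, reading the lower $\karea$ bound as the assertion that a negatively coupled holomorphic section must exist. First I would convert the data of an admissible $J$ into the data appearing in \eqref{eq.karea1}: the vertical distribution $T ^{vert} P_E$ together with its $J$-complex complement defines a Hamiltonian connection $\mathcal A$ on $P_E$, and since $\pi : P_E \to \mathbb{CP} ^n$ is $J$-holomorphic there is an induced compatible $j_X$ with $\pi$ being $(J, j_X)$-holomorphic. Thus an admissible $J$ is precisely a choice of pair $(\mathcal A, j_X)$, and jumping curves over a line $\ell$ are $J$-holomorphic sections of $P_E| _\ell$ whose class has negative coupling $\langle [\widetilde\Omega], [u]\rangle = -d < 0$.

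Next I would set up the parametrized moduli problem over the space of lines in $ \mathbb{CP} ^n$. For a fixed line $\ell \cong \mathbb{CP} ^1$ the hypothesis $c_1(E)=0$ forces $P_E| _\ell$ to be a trivial Hamiltonian $S ^2$-bundle, since its classifying element in $\pi_1 \text{Ham}( \mathbb{CP} ^{r-1})$ is the image of $c_1$ and hence zero; each line therefore carries the flat section in class $[S ^2]$ with vanishing coupling, and no jumping is visible fiberwise. Jumping is thus a property of the whole family $P_E \to \mathbb{CP} ^n$, which I would encode by the quantum class of \cite{GS} attached to the classifying map $ \mathbb{CP} ^n \to B\text{Ham}( \mathbb{CP} ^{r-1})$. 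Concretely this is a Gromov-Witten-type characteristic class counting, over the family of lines, $J$-holomorphic sections in classes $[S ^2]-d[line]$ with $d>0$, with incidence constraints arranged so the expected dimension vanishes.

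The key step is to show this quantum class is nonzero and that its non-vanishing is detected exactly by the non-vanishing higher Chern class of $E$. Here I would use the stable range $r \geq n$ together with the Bott-periodicity and loop-group identification of \cite{BP}: the classifying map factors through $BU$, and under this identification the quantum class corresponds to the genuine topological Chern data of $E$, so $c_k(E) \neq 0$ for some $k \geq 2$ guarantees it does not vanish. This is the main obstacle, and it has two parts: one must show that the passage from the honest Chern class to its quantum (Floer-theoretic) avatar introduces no cancellation, i.e. that the leading term of the quantum class is the classical Chern class, and one must show the resulting invariant is genuinely independent of the admissible $J$. Granting this, non-vanishing of a $J$-independent invariant forces the parametrized moduli space to be nonempty for every admissible $J$: there is a line $\ell$ and a $J$-holomorphic section of $P_E| _\ell$ with negative coupling, i.e. a degree-one jumping curve.

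Finally, to pin $d=1$ for generic $J$, I would invoke positivity of energy. Writing the symplectic form in its natural normalization as $\Omega = \widetilde\Omega + \pi ^* \omega_{ \mathbb{CP} ^n}$, with a line of area $1$, a $J$-holomorphic section $u$ over a line satisfies $0 \leq \langle [\Omega],[u]\rangle = -d + 1$, whence $d \leq 1$; since jumping requires $d>0$ and $d \in \mathbb{Z}$, this gives $d=1$. For generic admissible $J$ standard transversality cuts out the relevant moduli space cleanly of the expected dimension, excluding degenerations in the top stratum, so the section realizing the nonzero invariant genuinely has $d=1$.
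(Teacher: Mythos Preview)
Your overall strategy---encode the family of lines as a cycle, push forward to $\Omega^{2}BSU(r)_{S^{1}}$, and use the quantum classes together with the Bott-periodicity computation of \cite{BP} to force a nonvanishing parametrized Gromov--Witten invariant---is precisely the paper's approach. The paper makes this concrete via two lemmas: a bordism argument showing the pushed-forward cycle is homologically nontrivial modulo the constant section (this is where the nonvanishing Chern number enters), and then degeneration of the Serre spectral sequence for $\Omega^{2}BSU(r)_{S^{1}}$ combined with Theorem~\ref{theorem.corollary} to conclude some quantum-class product pairs nontrivially with it. Your ``leading term is the classical Chern class'' is a reasonable shorthand for this.

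There is, however, a genuine error in your final paragraph. The form $\Omega = \widetilde{\Omega} + \pi^{*}\omega_{\mathbb{CP}^{n}}$ is \emph{not} in general symplectic, and an arbitrary admissible $J$ is \emph{not} in general compatible with it: whether $\widetilde{\Omega}+c\,\pi^{*}\omega$ is nondegenerate depends on $c$ dominating the curvature of the connection, which is exactly what the $\karea$ quantity in Theorem~\ref{thm.projective} measures. So your positivity inequality $0\le -d+1$ is unjustified, and if it were valid it would give $d=1$ for \emph{every} admissible $J$, contradicting the statement of the theorem (which only claims $d=1$ under a genericity hypothesis). The paper instead argues that the nonvanishing invariant lives in the class $-[line]+[S^{2}]$; Gromov compactness gives, for any admissible $J$, a \emph{stable} section in that total class, whose principal (section) component lies in $-d[line]+[S^{2}]$ with $d\ge 1$ because the vertical bubbles carry positive multiples of $[line]$. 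For generic $J$ standard transversality rules out bubbling in the top stratum, so the section representing the invariant is honest and $d=1$. Your closing sentence about transversality is the correct mechanism; the energy-positivity argument preceding it should be removed, and you should add the stable-curve decomposition to handle the non-generic case.
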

\subsection*{Acknowledgements} I am deeply grateful to Leonid Polterovich for
explaining to me his ideas on $\karea$, and compelling me to think about this
subject. I also thank Jarek Kedra and
Dusa McDuff and Leonid for discussions and comments, as well as the anonymous
referee for some interesting questions, and numerous comments. This paper was
primarily written during the author's stay at MSRI during spring of 2010, much thanks to the organizers and
administration for creating a pleasant atmosphere. 
 \section {Setup} \label{section.setup}
Here are our main conventions. The Hamiltonian vector field
 generated by $H: (M, \omega) \to \mathbb{R}$ is given by 
 \begin{equation*}  \omega (X _{H}, \cdot) = - dH (\cdot).
\end{equation*}
 And an $\omega$-compatible almost complex structure $J$ is required to satisfy
 $\omega (v, J v) >0$, for $v \neq 0$.  Homology is always over $ \mathbb{Q}$
 unless specified otherwise.
 
 Our main tools are certain characteristic
 cohomology classes
 \begin{equation} \label {eq.qclasses.unstable}
 qc _{k} \in H ^{2k} (\Omega SU (r),
QH ( \mathbb{CP} ^{r-1})),
\end{equation}
where $QH ( \mathbb{CP} ^{r-1})$ denotes the ungraded vector space $H (
\mathbb{CP} ^{r-1}, \mathbb{Q})$, with its ungraded quantum product. These
classes were originally defined and named \emph{quantum classes} in much more generality in \cite{GS}.  Note however, that
$qc_k$ here is $qc _{2k}$ in \cite{GS}. The grading change is for convenience,
as in this context all odd quantum classes vanish.
We will not need the full definition, just some basic geometric content: and the
following theorem \cite{BP}:
   \begin{theorem} \label{theorem.corollary}
The classes $qc  _{k}$ on $\Omega SU (r)$ are algebraically independent and
generate cohomology in the stable range $2k \leq 2r-2$, with coefficients in
$QH ( \mathbb{CP} ^{r-1})$.
\end{theorem}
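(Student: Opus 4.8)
The plan is to reduce the statement to a classical computation in the cohomology of $\Omega SU(r)$ together with a single Gromov–Witten evaluation that pins down the leading term of each $qc_k$. First I would record the ambient algebra. Since $\Omega SU(r)$ admits a cell structure with cells only in even dimensions, its integral cohomology is torsion free, and Bott periodicity ($\Omega SU \simeq BU$ in the stable range) identifies $H^*(\Omega SU(r);\mathbb{Z})$ in degrees $* \le 2r-2$ with a polynomial algebra $\mathbb{Z}[b_1,\dots,b_{r-1}]$, $|b_k| = 2k$. By the universal coefficient theorem the coefficient change to $QH(\mathbb{CP}^{r-1})$ is flat, so in the stable range
\[
H^*(\Omega SU(r); QH(\mathbb{CP}^{r-1})) \;=\; QH(\mathbb{CP}^{r-1})[b_1,\dots,b_{r-1}].
\]
Thus the theorem is equivalent to the assertion that $qc_1,\dots,qc_{r-1}$ form a system of polynomial generators of this algebra over $QH(\mathbb{CP}^{r-1})$ in the stable range.

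Second, I would set up a graded Nakayama reduction. To see that the $QH$-algebra map sending a free generator $y_k$ to $qc_k$ is an isomorphism in the stable range, it suffices to check surjectivity modulo the graded augmentation ideal of $QH(\mathbb{CP}^{r-1})$, i.e. after setting the quantum/hyperplane variable to zero; isomorphism then follows since both sides are free $QH$-modules of the same finite rank in each degree of the stable range. Under this reduction $qc_k$ collapses to its classical leading coefficient $\overline{qc}_k \in H^{2k}(\Omega SU(r);\mathbb{Z})$, and one is reduced to showing that the $\overline{qc}_k$ are polynomial generators of $H^*(\Omega SU(r);\mathbb{Z})$, equivalently that the indecomposable part of $\overline{qc}_k$ generates $H^{2k}$ modulo decomposables for each $k \le r-1$.

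Third comes the detection step, which is where Bott periodicity is used essentially. Let $\beta: \mathbb{CP}^{r-1} \to \Omega SU(r)$ be the Bott map; in the stable range its image in Pontryagin homology carries the additive generators $\epsilon_k \in H_{2k}(\mathbb{CP}^{r-1})$ to algebra generators, so that $\langle \overline{qc}_k, \beta_*\epsilon_k \rangle = \langle \beta^*\overline{qc}_k, \epsilon_k\rangle$ detects precisely the indecomposable component of $\overline{qc}_k$. I would therefore compute $\beta^* qc_k$ using the geometric description of the quantum classes: the Bott map is represented by an explicit $\mathbb{CP}^{r-1}$-family of loops in $SU(r)$, and pulling $qc_k$ back to this family yields the parametrized Seidel/Gromov–Witten class of the associated Hamiltonian fibration of $\mathbb{CP}^{r-1}$'s over $\mathbb{CP}^{r-1}\times S^2$. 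The content is to show that the $h^k$-component of $\beta^* qc_k$ is a unit, up to sign and decomposables, which by the reduction above forces each $\overline{qc}_k$ to be a generator and hence the $qc_k$ to be algebraically independent and to generate in the stable range.

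The main obstacle is precisely this last Gromov–Witten computation: identifying the relevant section classes of the fibration produced by the Bott generator and checking that the leading invariant is $\pm 1$. Everything before it is formal; the nondegeneracy of the pairing is the crux, and it is exactly the point at which the rigidity of Bott periodicity — that the generating loops of $SU(r)$ are the ones realizing $\pi_*(BU)$ — must be matched against the quantum-geometric normalization of the classes $qc_k$.
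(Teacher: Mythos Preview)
The paper does not prove this theorem at all: it is quoted from the reference \cite{BP} and used as a black box (``We will not need the full definition, just some basic geometric content: and the following theorem \cite{BP}''). So there is no proof in the present paper to compare your proposal against.

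That said, your outline is a coherent strategy and is in the spirit of what one expects the argument in \cite{BP} to be: reduce to showing that the classical shadows $\overline{qc}_k$ generate $H^*(\Omega SU(r);\mathbb{Z})$ in the stable range, and then detect this by evaluating on the Bott cycles $\beta_*\epsilon_k$. Two comments on the outline itself. First, your Nakayama step as written is slightly glib: the augmentation ideal of $QH(\mathbb{CP}^{r-1})$ is not obtained merely by ``setting the quantum/hyperplane variable to zero,'' since $QH(\mathbb{CP}^{r-1})\cong \mathbb{Z}[h]/(h^r-q)$ has the hyperplane class $h$ and the Novikov variable $q$ intertwined; you need to be a bit more careful about what filtration you are reducing along so that the associated graded really is ordinary cohomology. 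Second, as you yourself flag, the entire content sits in the final Gromov--Witten evaluation $\langle \beta^* qc_k, \epsilon_k\rangle = \pm 1$; your proposal does not carry this out, so it is an outline rather than a proof. Since the present paper offers no argument either, the honest summary is that your sketch is plausible and likely parallels \cite{BP}, but a comparison with ``the paper's own proof'' is vacuous here.
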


Here is a brief overview of the geometric construction of quantum
classes. For more details of the following discussion see \cite{GS}. Let
$M \hookrightarrow P \to X$ be a Hamiltonian bundle, with a Hamiltonian
connection $ \mathcal {A}$,
with monotone fiber $(M,
\omega)$, over a smooth manifold $X$. (Here $M$ is used to denote a symplectic
manifold because it serves a different logical purpose to $(X, \omega)$ of
the Introduction, but this $M$ will just be $ \mathbb{CP} ^{r-1}$ in the rest of
the paper.) We have a natural $S ^{1}$-action on $\Omega ^{2} X$, induced by the
rotation of  $S ^{2}$, along the axis of revolution containing the base point $0 \in S ^{2}$. 

Let
$\Omega ^{2} X _{S ^{1}}$ denote the Borel $S ^{1}$-quotient:
\begin{equation*} \Omega ^{2} X _{S ^{1}} = \Omega ^{2} X \times _{S ^{1}} S
^{\infty}. \end{equation*} 

Let $B$ denote a closed, oriented smooth manifold. Given
a cycle $$B \xrightarrow{f} \Omega ^{2} X _{S ^{1}},$$
%
 there is a naturally
induced Hamiltonian bundle $M \hookrightarrow P _{f} \to Y$, where $Y \to B$ is
an oriented $S ^{2}$-bundle over $B$, classified by the composition $B \to
\Omega ^{2} X _{S ^{1}} \to \mathbb{CP} ^{\infty}$, with the map to $ \mathbb{CP} ^{\infty}$ being the canonical projection. Let us explain this: a  map $B \to 
\Omega ^{2} X _{S ^{1}}$ induces an $S ^{1}$-equivariant 
map $T \to X \times S ^{\infty}$, for $T$ an
oriented circle bundle over $B$. (Just by pulling back the universal circle bundle.) But this is
the same thing as an $ S ^{1}$-equivariant map $$T \times S ^{2} \to X \times S
^{\infty},$$ which then induces the $S ^{1}$-quotient map $Y \to X \times \mathbb{CP} ^{\infty}$, for $Y$ an oriented
$S ^{2}$-bundle over $B$. Our $M$ bundle over $ Y$ is then just the pull-back by
the induced map $Y \to X$.  Equivalently we have a bundle 
\begin{equation} \label {eqF} F \hookrightarrow P _{f}
\xrightarrow{p} B,
\end{equation}
 with $p$ denoting natural projection, where $F$ is a
Hamiltonian $M$ bundle over $S ^{2}$.

We may  define  classes $$qc_* \in H ^{*} (B, QH(M))$$ for $P _{f}$ as in
\cite{GS}, via count of certain $p$-fiberwise holomorphic curves, with a
$p$-fiberwise family of complex structures on $P _{f}$, induced by some
Hamiltonian connection $ \mathcal {A}$ on $M \hookrightarrow P \to X$. These
classes are induced by universal classes
\begin{equation}  \label {eq.qclasses} qc_* \in H ^{*} (\Omega ^{2} B\text
{Ham}(M, \omega)_ {S ^{1}}, QH(M)).
\end{equation}

Here are more details in the case $M= \mathbb{CP} ^{r-1}$ and $ \mathbb{CP}
^{r-1} \hookrightarrow P \to X$ is a projectivization of a rank $r$ complex
vector bundle $E$ with $c _{1} (E)=0$. For $f: B \to \Omega ^{2} B \text {Ham}(
\mathbb{CP} ^{r-1}, \omega) _{S ^{1}}$ as above, the fibers $F_b$ of $P _{f}
\to B$ (as in \eqref{eqF}) are
Hamiltonian bundle diffeomorphic to $F= \mathbb{CP} ^{r-1} \times  S ^{2}$,
although not naturally. The group of $ \text {Ham}( \mathbb{CP} ^{r-1})$-bundle
automorphisms of $F_b$ acts trivially on  homology, this follows by 
\cite[Theorem 1.16]{symp.str.fiber}. 
In particular a section class $A$ in $H_2 (F _{b})$ is uniquely characterized by
``degree'' $d$, \begin{equation*} A= d[line] +[\mathbb{CP}^1].
\end{equation*} 

Since $Y \to B$ has a pair of canonical sections  corresponding to the pair of
fixed points of $S ^{1}$ action on $S ^{2}$, we have a pair of natural
embeddings $I: B \times \mathbb{CP} ^{r-1} \to P _{f}$. 

 The classes we now define
``measure'' quantum self intersection of $I(B \times  \mathbb{CP} ^{n}) \subset P
_{f}$.  Let $$\mathcal {M} (P _{f}, d, \{J _{b}\})$$ denote the moduli space 
  of tuples $ (u, b)$, $u$ is a $J _{b}$-holomorphic section of $X
_{b}$ in degree $d$. The virtual dimension of this space is given by the
Fredholm index: \begin{equation*} \label {eq.dim} 2n + 2k + 2 \langle c_1
^{vert}, {A} \rangle = 2n + 2k + 2d \cdot (n+1) .
\end{equation*}

We define $qc _{k} \in H ^{2k} (\Omega \text {Ham}( \mathbb{CP} ^{n}, \omega),
QH ( \mathbb{CP} ^{n}))$ as follows: \begin{equation} \label {eq.def.ch.class}
\langle qc_k, [f] \rangle = \sum_ {d \in \mathbb{Z}} b _{d}. \end{equation} 
Where $b _{d} \in H_* ( \mathbb{CP} ^{n})$ is defined by
  duality: \begin{equation*} b _{d} \cdot _{ \mathbb{CP} ^{n}} c = ev _{d}
  \cdot _{B \times \mathbb{CP} ^{n}} [B] \otimes c, 
 \end{equation*} and where  
\begin{align*} ev _{d}: \mathcal {M}  (P _{f}, d, \{J _{b}\}) \to B
\times \mathbb{CP} ^{n} \\ ev _{d} (b, u) = (b, u (0)),
\end{align*}
 and $\cdot _{M}, \cdot _{B \times M}$ denote the 
intersection pairings in $ M$, respectively $ B \times M$. 
The sum  \eqref{eq.def.ch.class}, is  finite and only $d<0$ contribute for
dimensional reasons.
\section {Proofs}

\begin{proof}[Proof of Theorem \ref{thm.jumpline}] Let $E$ be a rank $r$
complex vector bundle over $ \mathbb{CP} ^{n}$, with $n \leq r$, and some Chern
class non-zero. We may assume without loss of generality that $E$ has a
non-vanishing Chern number. (Otherwise,  restrict the following discussion to a
subspace $ \mathbb{CP} ^{i} \subset \mathbb{CP} ^{n}$, corresponding to a
non-zero class $c_i (E)$.) And let $$ \mathcal {M} _{0,1 }(\mathbb{CP} ^{n},
[line],j; x_0, [\mathbb{CP} ^{n-1}]) \to \mathbb{CP} ^{n},$$ denote the moduli
 space of curves with 1 free marked point and 2 fixed marked points mapping to
  $x_0$, $ \mathbb{CP} ^{n-1}$, $x_0 \notin \mathbb{CP} ^{n-1}$, with
   $$ev: \mathcal {M} _{0,1 }(\mathbb{CP} ^{n}, [line],j; x_0, [\mathbb{CP} ^{n-1}]) \to \mathbb{CP} ^{n}, $$ 
denoting the evaluation map given by evaluating at the free marked point. It is
well known that the standard complex structure on $ \mathbb{CP} ^{n}$ is regular
and that for this standard $j$, the evaluation map  is a degree one map
\begin{equation*} P \to \mathbb{CP} ^{n},
\end{equation*}
where $P$ is an $S ^{2}$-bundle over $ \mathbb{CP} ^{n-1}$ associated to the
Hopf bundle. This is because there is a unique complex line through a pair of
points in $ \mathbb{CP} ^{n}$. The induced cycle $f: \mathbb{CP} ^{n-1} \to
\Omega ^{2} \mathbb{CP} ^{n} _{ S ^{1}}$ represents a class denoted $a$.
Let $e: \mathbb{CP}
^{\infty} \to \Omega ^{2} BSU (r)_{S ^{1}} = \Omega ^{2} BSU (r) \times _{S ^{1}}
S ^{\infty}$ be the  section corresponding to the canonical fixed point of the $S
^{1}$-action on $\Omega ^{2} BSU (r)$, i.e. the constant map of $S ^{2}$ to the
based point $x_0 \in X$.
 \begin{lemma} \label{lemma.k-essential}   $$0 \neq f _{  {E}_*} a\in H _{2n-2} (\Omega ^{2}BSU (r) _{S ^{1}})/e_* H_* ( \mathbb{CP}
^{\infty}),$$ where $$f _{E}: \Omega ^{2} \mathbb{CP} ^{n}_{S ^{1}} \to \Omega
^{2}BSU (r)_{S ^{1}}$$ is the map induced by $ E \to \mathbb{CP} ^{n}$.
\end{lemma}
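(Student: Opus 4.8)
The plan is to prove the lemma by exhibiting an explicit cohomology class that detects $f_{E_*}a$ in the quotient. Since the statement lives in \emph{ordinary} homology $H_{2n-2}(\Omega^2 BSU(r)_{S^1})$, it suffices to produce a class $\beta \in H^{2n-2}(\Omega^2 BSU(r)_{S^1})$ with two properties: (i) $e^*\beta = 0$, so that $\beta$ descends to the quotient by $e_* H_*(\mathbb{CP}^\infty)$; and (ii) $\langle \beta, f_{E_*}a\rangle \neq 0$. The natural candidate is the Bott-periodicity generator of $H^{2n-2}(\Omega^2 BSU(r))$, living in the fiber of the Borel fibration $\Omega^2 BSU(r) \to \Omega^2 BSU(r)_{S^1} \to \mathbb{CP}^\infty$ and extended over the total space; equivalently, using Theorem \ref{theorem.corollary} together with $r \geq n$ (so that $2(n-1) \leq 2r-2$ lies in the stable range), the ordinary reduction of the quantum class $qc_{n-1}$, which is then a nonzero generator.

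First I would reduce to the case where the top Chern class is nonzero. Using the normalization made at the start of the proof, after restricting to a projective subspace we may assume $c_n(E) \neq 0$, i.e.\ the top Chern number $\langle c_n(E), [\mathbb{CP}^n]\rangle$ is a nonzero integer; this is precisely the quantity the detecting class will see. Next I would identify the adjoint of the composite $f_E\circ f: \mathbb{CP}^{n-1}\to \Omega^2 BSU(r)_{S^1}$. The cycle $f$ arises from the degree-one evaluation $ev: P \to \mathbb{CP}^n$, where $p: P \to \mathbb{CP}^{n-1}$ is the $S^2$-bundle of lines through $x_0$. Under the double-loop adjunction, $f_E\circ f$ classifies, relative to the basepoint section, the bundle $ev^*E$ over the total space $P$ equipped with its $S^2$-fibration $p$. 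Taking $\beta$ to be the $p$-fiber transgression of the universal $c_n \in H^{2n}(BSU(r))$, one gets, for a nonzero normalization constant $\lambda$,
$$\langle \beta, f_{E_*}a\rangle = \lambda\,\langle c_n(ev^*E), [P]\rangle = \lambda\,\langle ev^* c_n(E), [P]\rangle = \lambda\,\langle c_n(E), ev_*[P]\rangle = \lambda\,\langle c_n(E),[\mathbb{CP}^n]\rangle \neq 0,$$
using that $ev$ has degree one, so $ev_*[P] = [\mathbb{CP}^n]$. Condition (i) holds because the constant-map section $e$ classifies the trivial bundle over the $S^2$-fibers, on which $c_n$, and hence its transgression, restrict to zero.

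The main obstacle is making the adjunction/transgression identity in the displayed chain rigorous: one must verify that the $S^1$-equivariant homological pushforward by the double-loop map $f_E$ is computed by the Chern class of the adjoint bundle $ev^*E$ over $P$, keeping careful track of three things. First, the basepoint, so that the image genuinely lands in the null component $\Omega^2_0 BSU(r) \simeq \Omega SU(r)$ — this is consistent with $c_1(E)=0$ forcing $E|_{\text{line}}$ to be topologically trivial, so that each line maps into the null component. Second, the $S^1$-Borel construction, to see that the chosen $\beta$ extends over $\Omega^2 BSU(r)_{S^1}$ while still restricting to zero along $e$. Third, the normalization identifying the transgressed $c_n$ with the generator $qc_{n-1}$ supplied by \cite{BP}, for which one only needs that the change of basis between the classical generators and the $qc_k$ is triangular, so that the leading term survives. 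Once this compatibility is in place, the degree-one property of $ev$ and the nonvanishing of the top Chern number close the argument.
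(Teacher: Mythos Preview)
Your argument is correct and rests on the same two geometric facts the paper uses --- that $ev: P \to \mathbb{CP}^n$ has degree one and that $E$ has a nonvanishing Chern number --- but you package them differently. The paper argues by contradiction: if $f_{E_*}a$ were in $e_*H_*(\mathbb{CP}^\infty)$, a bordism $T$ between the two cycles would carry a tautological $S^2$-bundle $P_T$ together with an evaluation map $ev_T: P_T \to BSU(r)$, exhibiting a homology between the nontrivial map $ev: P \to BSU(r)$ and a constant map; contradiction. You instead construct a detecting cohomology class $\beta \in H^{2n-2}(\Omega^2 BSU(r)_{S^1})$ directly, as the fiber-integration of the universal $c_n$ along the tautological $S^2$-bundle, and compute the pairing. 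Your route is cleaner in one respect: it sidesteps the (minor) subtlety in the paper's argument of passing from ``homologous'' to ``bordant.'' On the other hand, it front-loads the construction of $\beta$ and the verification that it extends over the Borel construction and vanishes on $e$, which you correctly flag as the main thing to check. Note that you do not actually need Theorem~\ref{theorem.corollary} or any mention of quantum classes here: the fiber-integration of $c_n$ is an entirely classical object, and invoking $qc_{n-1}$ at this stage, while not logically circular, would blur the division of labor between Lemma~\ref{lemma.k-essential} (pure topology) and Lemma~\ref{lemma.nonvanish} (where the quantum classes enter).
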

\begin {proof}
Let us suppose otherwise. 
%
The composition map $$ev: P \to \mathbb{CP} ^{n} \to
BSU (r),$$ is non vanishing in homology, since $E$ has a non-vanishing Chern
number and $$ev: P  \to \mathbb{CP} ^{n}$$ is degree one by discussion above.

 Let $$H: T \to \Omega ^{2} BSU (r)_{S ^{1}}$$ be a bordism of
$f _{ E _{*}}a _{i-1}$ to $c \in e_*H_* ( \mathbb{CP} ^{\infty})$. We'll call
the corresponding boundary pieces of $T$ by $T _{a}$ and $T _{c}$.  Consequently, 
the bordism $H$ induces an $S ^{2}$-bundle
$P _{T}$ over $T$,  it is the pull-back of the tautological $S
 ^{2}$-bundle $$(\Omega ^{2} _{x_0}BSU (r) \times E ^{\infty}) \times _{S ^{1}}
 S ^{2} \to\Omega ^{2} _{x_0} BSU (r)_{S ^{1}}.$$ and of course  $P _{T}$
restricts over $T _{a}$ to $P$. We have a natural ``evaluation'' map
\begin{equation*} ev _{T}: P _{T} \to BSU (r),
\end{equation*} 
restricting to evaluation maps $ev$, $ev_c$ over boundary, and so a homology
of $[ev]$ to $[ev_c]$, but $ev_c$ is the constant map to the based point $x_0
\in BU$ a contradiction.
\end {proof}
\begin{lemma} \label{lemma.nonvanish}  For some integer $\{\alpha_i, \beta
_{i}\}$ with at least one $\alpha _{i}, \beta _{i}$ non-zero:
 \begin{equation} \label {eq.nonvanish2} \langle \prod _{i,j} qc ^{\alpha_i}
 _{\beta_i} \wedge c_1 ^{j}, f _{{ E} _*} a _{n-1} \rangle \neq 0, 
\end{equation}
where $c_1$ denotes the pullback to $\Omega ^{2} BSU (r) _{S ^{1}}$ of the
canonical generator of $H ^{2} ( \mathbb{CP} ^{\infty})$ by the natural
projection $\Omega ^{2} BSU (r) _{S ^{1}} \to \mathbb{CP} ^{\infty}$.
\end{lemma} 

\begin{proof} Note that all of the rational cohomology of $\Omega ^{2} BSU (r)
\simeq \Omega SU (r)$, is in even degree, since by Milnor-Morre, Cartan-Serre \cite{MM},
\cite{CS}, the rational homology algebra of $\Omega SU (r)$ is generated as a
ring with Pontryagin product by the  rational homotopy groups, (via Hurewicz homomorphism) which are
all in even degrees since the rational homotopy groups of $SU (r)$ are well known to be all in odd degrees. (In fact $SU (r)$ has the
rational homotopy type of the product of odd spheres $S ^{3} \times S ^{5}
\ldots$) Consequently,  the Serre spectral sequence for the fibration $$\Omega
^{2} BSU (r) \hookrightarrow \Omega ^{2} BSU (r) _{S ^{1}} \to \mathbb{CP} ^{\infty}$$
degenerates at the second page and so: \begin{equation*} H ^{*} (\Omega ^{2} BSU (r) _{S ^{1}}) \simeq H ^{*} (\Omega
 ^{2} BSU (r)) \otimes H ^{*} ( \mathbb{CP} ^{\infty}) \simeq H ^{*} (\Omega SU (r))
\otimes H ^{*} ( \mathbb{CP} ^{\infty}).
\end{equation*}
 Our lemma then follows by  Lemma \ref{lemma.k-essential} and Theorem
 \ref{theorem.corollary}. 
 \end{proof}
 The theorem then readily follows. Since by  construction of quantum
 classes and
 Lemma \ref{lemma.nonvanish},  for any fixed (not necessarily regular) complex
 structure $J$ on $P$ compatible with $\Omega$, and with projection to $
 \mathbb{CP} ^{n}$ for some complex line $l$ in $
 \mathbb{CP} ^{n}$, the restriction of $P$ to $l$, which is diffeomorphic to $ \mathbb{CP} ^{r-1}
 \times S ^{2}$ has a $J $ holomorphic stable section $u$ in total class $S= -
 [line] + S ^{2}$. As otherwise the relevant Gromov-Witten invariants in class
 $S$ all vanish and 
 \eqref{eq.nonvanish2} is impossible. Of course the stable
 section $u$ may be in the form of a holomorphic section $u _{p}$ in class $
 d[line] + S ^{2}$, with $d<-1$ together with some vertical holomorphic
 bubbles, but this still implies our claim.
\end {proof}
\begin {proof} [Proof of Theorem \ref{thm.projective}]
We just need the following lemma:
\begin{lemma}  \label{lemma.norm} The norm of the curvature $||R _{ \mathcal
{A}}||$ of the projectivization $ \mathbb{CP} ^{r-1} \hookrightarrow P \to
\mathbb{CP} ^{n}$ is at least $1$. 
\end{lemma}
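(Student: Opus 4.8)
The plan is to combine the existence of a jumping curve from Theorem \ref{thm.jumpline} with the standard relation between the coupling form and the curvature, in the spirit of \cite{polterovich}. First I would manufacture from the data of the lemma a specific admissible almost complex structure: given the Hamiltonian connection $\mathcal{A}$ and the compatible $j_X$ on $\mathbb{CP}^n$, let $J=J_{\mathcal{A}}$ be the almost complex structure on $P$ which restricts to a fixed fiberwise $\omega_{st}$-compatible structure on the vertical tangent spaces, is the $\mathcal{A}$-horizontal lift of $j_X$ on the horizontal spaces, and is $\Omega$-compatible. This $J$ is admissible, so Theorem \ref{thm.jumpline} applies and produces a line $C\subset\mathbb{CP}^n$ together with a $J$-holomorphic stable section $u$ of $P|_C\simeq\mathbb{CP}^{r-1}\times S^2$ whose total class $S$ pairs with the coupling class as $\langle[\widetilde{\Omega}],S\rangle=-d$, $d>0$. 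I would then discard the vertical bubble components, which lie in fiber classes and hence have nonnegative coupling pairing, and work with the principal section component $u_p$, a genuine $J$-holomorphic section with $\int_C u_p^*\widetilde{\Omega}=-d_p\le -1$.

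Since $J$ projects to $j_X$ and $u_p$ is $J$-holomorphic, the base curve $C=p(u_p)$ is genuinely $j_X$-holomorphic in class $[line]$; therefore its Riemannian area for the metric $g_{j_X}$ equals its symplectic area, $\area_{g_{j_X}}(C)=\int_C\omega=\langle[\omega],[line]\rangle=1$, independently of $j_X$, where $\omega$ is the Fubini--Study form normalized so that a line has area $1$.

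The heart of the argument is the energy identity for holomorphic sections relating the coupling area to the curvature (see \cite{MS2} and \cite{polterovich}): for a section holomorphic with respect to the connection-compatible $J$,
\[
\int_C u_p^*\widetilde{\Omega}-\int_C R^{\mathcal{A}}(u_p)=(\text{vertical energy of } u_p)\ge 0,
\]
where $R^{\mathcal{A}}(u_p)$ denotes the $2$-form on $C$ obtained by evaluating the normalized-Hamiltonian-valued curvature at $u_p$. Writing $R^{\mathcal{A}}=R^{\mathcal{A}}(\hat\xi,\hat\eta)\,d\mathrm{vol}$ for the positively oriented $g_{j_X}$-orthonormal frame $(\hat\xi,\hat\eta)$, antisymmetry of the curvature $2$-form gives $R^{\mathcal{A}}(\hat\xi,\hat\eta)(u_p)\ge\min_M R^{\mathcal{A}}(\hat\xi,\hat\eta)=-\max_M R^{\mathcal{A}}(\hat\eta,\hat\xi)\ge -||R^{\mathcal{A}}||_{g_{j_X}}$, since $(\hat\eta,\hat\xi)$ is again orthonormal and the norm uses only the half-Hofer maximum. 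Integrating and assembling the estimates,
\[
-d_p=\int_C u_p^*\widetilde{\Omega}\;\ge\;\int_C R^{\mathcal{A}}(u_p)\;\ge\;-||R^{\mathcal{A}}||_{g_{j_X}}\cdot\area_{g_{j_X}}(C)=-||R^{\mathcal{A}}||_{g_{j_X}},
\]
whence $||R^{\mathcal{A}}||_{g_{j_X}}\ge d_p\ge 1$, which is the lemma.

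The main obstacle I anticipate is the energy identity itself: pinning down its sign convention so that the vertical-energy term is manifestly nonnegative for holomorphic (rather than anti-holomorphic) sections, and checking that it holds for the \emph{canonical} coupling form $\widetilde{\Omega}$ rather than for the (non-canonical) total form $\Omega$. This last point is essential, since positivity of $\Omega$-energy alone only bounds $d$ by the freely choosable area of the base and says nothing about $||R^{\mathcal{A}}||$. A secondary technical point is to arrange $J=J_{\mathcal{A}}$ to be simultaneously connection-adapted, so the identity applies, and admissible, so Theorem \ref{thm.jumpline} applies, and to carry out the bubbling bookkeeping so that the negative coupling pairing is borne by an honest section component.
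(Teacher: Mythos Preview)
Your argument is correct and rests on the same idea as the paper's: produce the holomorphic section over a line via Theorem~\ref{thm.jumpline}, then extract $||R^{\mathcal{A}}||\ge d\ge 1$ from positivity of its energy. The packaging differs slightly. Instead of invoking the pointwise energy identity for the coupling form and then bounding $\int_C R^{\mathcal{A}}(u_p)$ by $||R^{\mathcal{A}}||\cdot\area_{g_{j_X}}(C)=||R^{\mathcal{A}}||$, the paper builds the curvature bound directly into a total symplectic form $\Omega=\widetilde{\Omega}+(||R^{\mathcal{A}}||+\epsilon)\,p^*\omega_{st}$, picks any $\Omega$-compatible admissible $J$, and reads off the inequality cohomologically from $0\le\langle[\Omega],[u]\rangle=-d+||R^{\mathcal{A}}||+\epsilon$. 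This is precisely the integrated form of your estimate; it sidesteps the sign-convention worry you flagged and does not require $J$ to be adapted to the connection $\mathcal{A}$. Conversely, your version makes explicit why the coefficient $||R^{\mathcal{A}}||+\epsilon$ is exactly what is needed to make $\Omega$ nondegenerate.
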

\begin{proof} 
 Let $ \widetilde{\Omega}$ denote the coupling form of the Hamiltonian fibration
 $P$ associated to $ \mathcal {A}$, (see for example \cite{MS2} for discussion on coupling forms). This  is a
certain closed form associated to  the curvature form of $ \mathcal {A}$, 
with the following properties.

The 
restriction of $ \widetilde{\Omega} $ to fibers $M \simeq \mathbb{CP} ^{r-1}$ of
$P \to \mathbb{CP} ^{n}$ coincides with $\omega _{st}$. The $
\widetilde{\Omega}$-orthogonal subspaces in $TP$ to the fibers are horizontal
subspaces, whose value on horizontal lifts $$ \widetilde{v}, \widetilde{w} \in T _{m,z} P$$ of $v,w \in T _{z} \mathbb{CP} ^{n}$
are given by $$ \widetilde{\Omega} ( \widetilde{v}, \widetilde{w}) = -R _{ \mathcal {A}} (v, w) (m),$$ 
for $m \in M _{z}$; in other words we evaluate the Lie algebra element of $ \text
{Ham}(M_z, \omega)$: $R _{ \mathcal {A}} (v, w)$ (i.e. a function on $M _{z}$)
at $m$.

Consider the symplectic form $\Omega =  \widetilde{\Omega}
+ (||R ^{ \mathcal {A}}|| + \epsilon) \omega _{st} $, where $\omega _{st}
$, the standard Fubini-Study symplectic form on the base normalized by the
condition that the area of a complex line is 1, and $\epsilon>0$. Pick any
compatible complex structure $J _{ \mathcal {A}}$. By Theorem
\ref{thm.jumpline}, for some complex line $l$ in $ \mathbb{CP} ^{n}$,
the restrictions of $P$ to $l$, which is diffeomorphic to $
\mathbb{CP} ^{r-1} \times S ^{2}$ (by assumption that $c _{1} (E)=0$) has a $J
_{ \mathcal {A}}$-holomorphic section $u$ in class $S= d \cdot [line] + S ^{2}$, with $d \leq -1$. Where $[line]$ is
the class of the complex line in $ \mathbb{CP} ^{r-1}$.

Since $J _{ \mathcal {A}}$ is $\Omega$ compatible, for the
class $S$, $J _{ \mathcal {A}}$-holomorphic section $u$ of $P|_l$ we get
\begin{equation*} 0 \leq [\Omega] ( [u])  = [ \widetilde{\Omega }] ( [u])
+   ||R ^{ \mathcal {A}}|| + \epsilon.
\end{equation*}
On the other hand $[\widetilde{\Omega}] = [\omega _{st}]$ on $P| _{l}$ since the
cohomology class of the coupling form is independent of the choice of
connection, and the form $\omega _{st}$ on $ \mathbb{CP} ^{r-1} \times S ^{2}$,
is another coupling form associated to the trivial connection on this bundle.
Since $ [ \omega _{st}] ( [line])=1$ by our normalization, it follows that $ [ \widetilde{\Omega} ] ([u])= d$, since $[u] =
d[line]+ S ^{2}$. So we get: \begin{equation*} -d \leq  ||R ^{ \mathcal {A}}|| +
\epsilon,
\end{equation*}
for every $\epsilon >0$. 
\end{proof}
This finishes the proof of the theorem.
\end{proof}
\bibliographystyle{siam}  
\bibliography{link} 
\end{document}